\newcommand{\R}{{\mathbb{R}}}
\newcommand{\Z}{{\mathbb{Z}}}
\newcommand{\bb}{{\mathscr{B}}}
\newcommand{\aaa}{{\mathscr{A}}}
\newcommand{\eps}{{\varepsilon}}
\newcommand{\zero}{\mathbf{0}}
\newcommand{\la}{\langle}
\newcommand{\ra}{\rangle}
\declaretheorem{theorem}
\declaretheorem{lemma}
\title{\textbf{Binary scalar products}}
\author[1]{Andrey Kupavskii}
\affil[1]{Moscow Institute of Physics and Technology, Russia}
\affil[1]{Institute for Advanced Study, Princeton, USA}
\affil[1]{G-SCOP, CNRS, Grenoble, France}
\author[2]{Stefan Weltge}
\affil[2]{Technical University of Munich, Germany}
\date{}
\begin{document}

\maketitle

\begin{abstract}
    Let $\aaa,\bb \subseteq \R^d $ both span $\R^d$ such that $\la a, b\ra \in \{0,1\}$ holds for all $a \in \aaa$, $b \in \bb$.
    We show that~$ |\aaa| \cdot |\bb| \le (d+1) 2^d $.
    This allows us to settle a conjecture by Bohn, Faenza, Fiorini, Fisikopoulos, Macchia, and Pashkovich (2015) concerning 2-level polytopes.
    Such polytopes have the property that for every facet-defining hyperplane $H$ there is a parallel hyperplane $H'$ such that $H \cup H'$ contain all vertices.
    The authors conjectured that for every $d$-dimensional 2-level polytope $P$ the product of the number of vertices of $P$ and the number of facets of $P$ is at most~$d 2^{d+1}$, which we show to be true.
\end{abstract}

\section{Introduction}

For two vectors $a = (a_1,\ldots, a_d), b = (b_1,\ldots,b_d)$, let $\la a, b\ra = \sum_{i=1}^d a_ib_i$ their scalar product. Given two sets $\aaa,\bb \subseteq \R^d$ that both linearly span $\R^d$ with the property that $\la a, b\ra \in \{0,1\}$ holds for all $a \in \aaa$, $b \in \bb$, how many points can $\aaa$ and $\bb$ contain?
It is easy to see that each individual set cannot contain more than $2^d$ points.
This bound is tight since we may choose $\aaa = \{\zero, e_1,\dots,e_d\}$ and $\bb = \{0,1\}^d$.
However, it turns out that $|\aaa|$ and $|\bb|$ cannot be close to this bound simultaneously.
In fact, in this paper we prove the following.

\begin{theorem}
    \label{thmbinary}
    Let $\aaa,\bb \subseteq \R^d$ both linearly span $\R^d$ such that $\la a, b\ra \in \{0,1\}$ holds for all $a \in \aaa$, $b \in \bb$.
    Then we have $|\aaa| \cdot |\bb| \le (d+1) 2^d$.
\end{theorem}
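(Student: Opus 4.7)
I would proceed by induction on~$d$. The base case $d=1$ is immediate: if $a, a' \in \aaa$ and $b \in \bb$ are all nonzero, then $ab = a'b = 1$ forces $a = a'$, so $|\aaa|, |\bb| \le 2$ and $|\aaa| \cdot |\bb| \le 4 = 2 \cdot 2^1$.

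For the inductive step, the first move is a normalization. Given any $b_0 \in \bb$, replacing $\bb$ by $\bb - b_0$ and simultaneously negating every $a \in \aaa$ with $\la a, b_0 \ra = 1$ preserves both cardinalities, the spanning properties of $\aaa$ and~$\bb$, and the binary scalar-product condition. Hence we may assume $\zero \in \bb$. Now pick any $a_0 \in \aaa \setminus \{\zero\}$ (possible since $\aaa$ spans) and partition $\bb = \bb_0 \sqcup \bb_1$, where $\bb_i = \{b \in \bb : \la a_0, b\ra = i\}$; both parts are nonempty, since $\zero \in \bb_0$ while $\bb_1 \neq \emptyset$ follows from $a_0 \neq \zero$ and the fact that $\bb$ spans.

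To bound $|\aaa| \cdot |\bb_0|$, I orthogonally project $\aaa$ onto $a_0^\perp$ to obtain $\bar{\aaa}$, which spans $a_0^\perp \cong \R^{d-1}$; then $(\bar\aaa, \bb_0)$ is a binary pair in $\R^{d-1}$ to which the inductive hypothesis applies (after further projecting onto the span of $\bb_0$ if necessary). For any two $a, a' \in \aaa$ with the same projection, $a - a' = t a_0$ and, testing against any $b \in \bb_1$, $t = \la a, b\ra - \la a', b\ra \in \{-1, 0, 1\}$, so the fibers of $\aaa \to \bar{\aaa}$ have size at most~$2$. An entirely analogous argument controls $|\aaa| \cdot |\bb_1|$: pick $b_1 \in \bb_1$, shift $\bb_1 \to \bb_1 - b_1 \subseteq a_0^\perp$, negate each $a \in \aaa$ with $\la a, b_1\ra = 1$ to produce a binary pair $(\aaa^*, \bb_1 - b_1)$, and project $\aaa^*$ onto $a_0^\perp$ to apply induction.

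The delicate part is combining the two half-bounds. A naive combination using $|\aaa| \le 2 |\bar\aaa|$ and the inductive bound $|\bar\aaa| \cdot |\bb_i| \le d \cdot 2^{d-1}$ only gives $|\aaa|\cdot|\bb| \le 2d \cdot 2^d$, which is too weak. To recover the sharp constant I would exploit that a size-$2$ fiber above some $\bar a$ corresponds to a pair $\{a, a + a_0\} \subseteq \aaa$, and the constraints $\la a, b\ra, \la a+a_0,b\ra \in \{0,1\}$ together with $\la a_0, b\ra = 1$ force $\la a, b\ra = 0$ for every $b \in \bb_1$. Thus size-$2$ fibers on the $\bb_0$-side live in $\text{span}(\bb_1)^\perp \cap \aaa$, and a symmetric statement holds on the $\bb_1$-side. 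A careful matching of these "excess" contributions across the two halves should then absorb the fiber doubling into a single extra term of size $|\bb|$, yielding
\[|\aaa| \cdot |\bb| \;\le\; d \cdot 2^d + |\bb| \;\le\; (d+1) \cdot 2^d,\]
using $|\bb| \le 2^d$. The hardest step will be precisely this final bookkeeping: showing that the combined "fiber excess" from the two sides is bounded by $|\bb|$ rather than contributing a full factor of~$d$ on each side, so that the overall bound is $(d+1)\cdot 2^d$ instead of the looser $2d \cdot 2^d$.
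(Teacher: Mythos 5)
Your outline follows the same skeleton as the paper's proof, with the roles of $\aaa$ and $\bb$ swapped: split one family according to its scalar product with a distinguished vector of the other, project along that vector, observe that fibers have size at most two, and induct. However, the two places you yourself flag as delicate are exactly where the real work lies, and both are genuinely open in your sketch. First, the parenthetical ``after further projecting onto the span of $\bb_0$ if necessary'' hides the hardest step: when $\mathrm{span}(\bb_0)$ is a proper subspace of $a_0^\perp$, that second projection can identify many points of $\overline{\aaa}$, and the $\{0,1\}$-condition alone gives no bound on those fibers. The paper needs two extra ingredients precisely here: it assumes the families are inclusion-wise maximal, and it chooses the distinguished vector extremally (maximizing the dimension of the faces of the other family's convex hull that it defines); the corresponding counting claim (in the paper's notation, $|\pi(\bb)| \le 2^{d-1-\dim U_0}|\tau(\pi(\bb))|$) is proved by constructing new admissible vectors and contradicting that extremal choice, and is by far the longest argument in the paper. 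Your arbitrary choice of $a_0$, with no maximality assumption, gives you no handle on this case.

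Second, the final ``bookkeeping'' is not merely unfinished: the inequality you aim for, $|\aaa|\cdot|\bb| \le d\,2^d + |\bb|$, is false under your normalization. Take $\aaa = \{0,1\}^d$ and $\bb = \{\zero, e_1,\dots,e_d\}$ (so $\zero \in \bb$): then $|\aaa|\cdot|\bb| = (d+1)2^d$, while $d\,2^d + |\bb| = d\,2^d + d + 1 < (d+1)2^d$ for every $d \ge 2$. So ``absorbing the fiber excess into a single term of size $|\bb|$'' cannot work as stated; any repair must treat the two families asymmetrically, and you give no criterion for which family plays which role. The paper's resolution is different: it shows that every $b$ with a doubled fiber has \emph{constant} scalar product with one of the two parts $\aaa_0,\aaa_1$, bounds the resulting products by $|\aaa_i||\bb_i| \le 2^d$ using a lemma that a subset of $\{0,1\}^d \cup \{0,-1\}^d$ without opposite points has at most $2^{\dim}$ elements, and finishes with a case distinction on whether $\aaa_0$ spans a hyperplane. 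Your observation that the lower point of a doubled pair is orthogonal to all of $\bb_1$ is a correct step in this direction, but as written the combination argument is missing and its stated target is unattainable.
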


The previous example also shows that this bound is tight. We note that if one restricts her attention to the families of vectors coming from  the Boolean cube $\{0,1\}^d$ then questions of similar nature are studied in extremal set theory. In particular, see \cite[Chapter 10]{frankl_forbidden_1987}. Certain extremal set theory-type problems for families of vectors coming from $\{0,\pm 1\}^d$ were studied in \cite{cherkashin_independence_2019, deza_bounds_1985, frankl_erdoskorado_2018, frankl_families_2018, frankl_intersection_2020}.

Our main motivation for studying this question is its close relation to point configurations associated to 2-level polytopes.
A polytope $P$ is said to be \emph{2-level} if for every facet-defining hyperplane $H$ there is a parallel hyperplane $H'$ such that $H \cup H'$ contains all vertices of $P$.
Basic examples of 2-level polytopes are hypercubes, cross-polytopes, and simplices.
Actually, 2-level polytopes generalize a variety of interesting polytopes such as Birkhoff, Hanner, and Hansen polytopes, order polytopes and chain polytopes of posets, stable matching polytopes, and stable set polytopes of perfect graphs~\cite{AprileCF}.
Moreover, they arise in different areas of mathematics, most notably in the field of extended formulations, which has received much attention during the past decade.

A fundamental result in polyhedral combinatorics states that $d$-dimensional stable set polytopes of perfect graphs admit subexponential (in $d$) size linear extended formulations, i.e., they are linear images of polytopes with subexponentially many facets~\cite{Yannakakis}.
It is a major open problem whether such polytopes have polynomial-size extended formulations.
Moreover, the famous log-rank conjecture by Lov{\'a}sz and Saks~\cite{LogRank} in the field of communication complexity would imply subexponential-size extended formulations for all 2-level polytopes.
However, no non-trivial bound is known for this general case.
In contrast, it is known that, among all $d$-dimensional polytopes, 2-level polytopes admit smallest possible semidefinite extended formulations~\cite{Gouveia}.
Details on these connections and several recent studies on 2-level polytopes can be found in~\cite{AprileCF,AprileFFHM,BohnESA,BohnMPC,Fiorini,Gouveia,Grande1,Grande2}.

Among them are extensive experimental studies by Bohn, Faenza, Fiorini, Fisikopoulos, Macchia, and Pashkovich~\cite{BohnESA,Fiorini,BohnMPC}, which led to a beautiful conjecture about the combinatorial structure of 2-level polytopes.
It is easy to see that for a $d$-dimensional 2-level polytope $P$ the number of vertices $f_0(P)$ and the number of facets $f_{d-1}(P)$ are both bounded by $2^d$.
Similar to the setting of Theorem~\ref{thmbinary}, Bohn et al.\ observed that for small values of $d$, $f_0(P)$ and $f_{d-1}(P)$ cannot be close to this bound simultaneously.
More specifically, they verified that every 2-level polytope of dimension $d \le 7$ satisfies $f_0(P) f_{d-1}(P) \le d 2^{d+1}$, which was later also confirmed for $d=8$ in~\cite{MacchiaPhD}.
In~\cite{BohnESA} it is asked whether this holds for all $d$, and in the journal version~\cite{BohnMPC} this is posed as a conjecture.

Recently, Aprile, Cevallos, and Faenza~\cite{AprileCF,AprilePhD} showed that this conjecture holds for many families of 2-polytopes, including the ones mentioned above.
We show that it is true for all 2-level polytopes:

\begin{restatable}{theorem}{twolevelresult}
    \label{thm2level}
    Every $d$-dimensional $2$-level polytope $P$ satisfies $f_0(P) f_{d-1}(P) \le d 2^{d+1}$.
\end{restatable}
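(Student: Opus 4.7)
The plan is to reduce Theorem~\ref{thm2level} to Theorem~\ref{thmbinary} via a direct encoding of $P$. After translating $P$ so that one vertex lies at $\zero$, I take $\aaa$ to be the vertex set of the translated polytope. For each facet $F_i$ of $P$, the 2-level property provides a parallel hyperplane $H_i'$ with $V(P) \subseteq H_i \cup H_i'$; since $\zero$ is a vertex, exactly one of $H_i, H_i'$ passes through the origin, and I let $c_i \in \R^d$ be the unique linear functional vanishing on that hyperplane and equal to $1$ on the other. Setting $\bb := \{c_i\}$, we get $\la c_i, v\ra \in \{0,1\}$ for every vertex $v$.

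Both $\aaa$ and $\bb$ linearly span $\R^d$: the former because $P$ is $d$-dimensional with $\zero \in V(P)$, the latter because the facet normals of any bounded $d$-polytope span $\R^d$. Theorem~\ref{thmbinary} then yields $|\aaa| \cdot |\bb| \le (d+1)\,2^d$. As $(d+1)\,2^d \le d \cdot 2^{d+1}$ holds for every $d \ge 1$ (equivalent to $d+1 \le 2d$), the conclusion $f_0(P) f_{d-1}(P) \le d \cdot 2^{d+1}$ follows whenever $|\bb| = f_{d-1}(P)$, i.e., when the $c_i$ are pairwise distinct.

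The delicate step, which I expect to be the main hurdle, is that $c_i = c_j$ for two distinct facets exactly when they form an opposite parallel pair: in that case $\{H_i, H_i'\} = \{H_j, H_j'\}$, and the $\{0,1\}$-normalization then pins down the same linear functional. To close the resulting gap I would exploit that any such pair forces $V(P) = V(F_i) \sqcup V(F_{-i})$ by the 2-level property, making $F_i$ and $F_{-i}$ into $(d-1)$-dimensional 2-level polytopes that jointly partition $V(P)$. Combining this rigid dichotomy either with an inductive argument on $d$, or with a refined encoding (for instance a passage to the affine lift $v \mapsto (v,1) \in \R^{d+1}$, in which the functionals $(c_i, -\alpha_i)$ associated to opposite facets become distinct), should absorb the missing factor and yield the tight bound $f_0(P) f_{d-1}(P) \le d \cdot 2^{d+1}$.
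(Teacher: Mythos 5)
Your reduction and your handling of the easy case (all normalized facet functionals distinct) are exactly right and match the paper: with $\zero$ a vertex, $\aaa$ the vertex set and $\bb$ the $\{0,1\}$-normalized facet functionals, Theorem~\ref{thmbinary} gives $(d+1)2^d \le d2^{d+1}$. You have also correctly isolated the only obstruction: a functional $b$ can define \emph{two} opposite parallel facets, in which case $f_{d-1}(P)$ can be as large as $2|\bb|$ and the black-box bound only yields $2(d+1)2^d = (d+1)2^{d+1}$, which exceeds $d2^{d+1}$.

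However, the step that closes this gap is missing, and neither of the two routes you sketch works as stated. For the induction on $d$: while $V(P) = V(F_i) \sqcup V(F_{-i})$ does hold, the induction hypothesis applied to $F_i$ bounds $f_0(F_i)\,f_{d-2}(F_i)$, and $f_{d-2}(F_i)$ does not control $f_{d-1}(P)$ --- facets of $P$ need not meet $F_i$ in facets of $F_i$ (already for the octahedron each facet has only $3$ of the $8$ facets adjacent to it), so there is no inequality of the form $f_{d-1}(P) \le C\, f_{d-2}(F_i)$ to feed into the induction. For the affine lift $v \mapsto (v,1)$: even granting that the lifted functionals span $\R^{d+1}$ (which itself requires an argument, since a bad choice of normalizations can leave them in a hyperplane), Theorem~\ref{thmbinary} applied in $\R^{d+1}$ only gives $(d+2)2^{d+1} > d2^{d+1}$, so the lift loses more than it gains. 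The paper's actual fix is not to use Theorem~\ref{thmbinary} as a black box but to re-enter its proof: a vector $b_d$ defining two facets maximizes $\varphi$ and is therefore a legitimate choice of $b_d$ there; with this choice $\dim U_0 = d-1$ and, because $\zero \notin \bb$, one gets $\bb_0 = \bb_1 = \emptyset$, so the intermediate inequality~\eqref{eq8226} sharpens to $|\aaa||\bb| \le d2^d$ and hence $f_0(P)f_{d-1}(P) \le 2|\aaa||\bb| \le d2^{d+1}$. Some idea of this kind --- extracting a strictly better bound on $|\aaa||\bb|$ in the presence of an opposite facet pair --- is what your argument still needs.
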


Note that this bound is tight by choosing $P = [0,1]^d$.
Another simple consequence of Theorem~\ref{thmbinary} is the following.
Let $V$ be a finite set and let $\aaa$, $\bb$ be families of subsets of $V$ such that $|A \cap B| \le 1$ holds for all $A \in \aaa$, $B \in \bb$.
Then we have $|\aaa| \cdot |\bb| \le (|V|+1)2^{|V|}$.
For instance, this implies that for any $n$-node graph the number of its stable sets times the number of its cliques is bounded by $(n+1)2^n$ (see also \cite[Thm. 3.1]{AprileCF}), which is attained for the empty and complete graph, respectively.

\paragraph{Outline}
The proof of Theorem~\ref{thmbinary} is presented in the next section, in which we make use of several  claims, whose proofs are given in Section~\ref{secClaims}.
In Section~\ref{sec2level} we provide the proof of Theorem~\ref{thm2level}.

\section{Main proof}
\label{secMainProof}

In this section, we provide a proof for Theorem~\ref{thmbinary}. In what follows, depending on the situation, we treat the sets $\aaa,\bb$ either as sets of vectors or as sets of points. In particular, `span' always stands for `linearly span', while `$\dim $' stands for affine dimension. 

Let $f(d)$ be the maximum of $|\aaa| \cdot |\bb|$ over all $\aaa,\bb \subseteq \R^d$ that both span $\R^d$ such that $\la a, b\ra \in \{0,1\}$ for all $a \in \aaa$, $b \in \bb$.
We show that $f(d) \le (d+1)2^d$ holds by induction on $d \ge 0$.
Note that $f(0) = 1$ and let $d \ge 1$.

Let $\aaa,\bb \subseteq \R^d$ both span $\R^d$ such that $\la a, b\ra \in \{0,1\}$ for all $a \in \aaa$, $b \in \bb$.
We may assume that $\aaa$ and $\bb \setminus \{\zero\}$ are inclusion-wise maximal with respect to this property.\footnote{Clearly, we can always include $\zero$ in $\bb$. However, we emphasize that our proof only uses the inclusion-wise maximality of $\bb\setminus\{\mathbf 0\}$, a detail that becomes relevant for our application to 2-level polytopes.}
Note that every nonzero vector $x \in \R^d$ defines two faces of the convex hull of $\aaa$, and let $\varphi(x)$ denote the maximum of the dimensions of these two faces.
Let us pick $b_d \in \bb \setminus \{\zero\}$ such that $\varphi(b_d) \ge \varphi(b)$ holds for every $b \in \bb \setminus \{\zero\}$.

In what follows, we will invoke the induction hypothesis in the affine hull of one of the two faces that $b_d$ defines.
To this end, it will be convenient to slightly modify $\aaa$ and $\bb$, which is done in the following claim.
We say that a set $X \subseteq \R^d$ \emph{does not contain opposite points} if $|X \cap \{x,-x\}| \le 1$ holds for all $x \in \R^d$.
Let $$U := \{ x \in \R^d : \langle x,b_d \rangle = 0 \}$$ and consider the orthogonal projection $\pi : \R^d \to U$ on $U$.

\begin{restatable}{claim}{claimassumptions}
    \label{firstClaim}
    We may translate $\aaa$ and replace some points in $\bb$ by their negatives such that the following holds.
    \begin{itemize}
        \item[(i)] We can write $\aaa = \aaa_0 \cup \aaa_1$, where $\aaa_i = \{ a \in \aaa : \la a, b_d\ra = i\}$ for $i=0,1$ such that
            \begin{equation} \label{eqa0gea1}
                |\aaa_0| \ge |\aaa_1|.
            \end{equation}
        \item[(ii)] We still have
            \begin{equation}
                \label{eqscala0}
                \la a, b\ra \in \{0,1\} \text{ for each } a \in \aaa_0 \text{ and } b \in \bb.
            \end{equation}
        \item[(iii)] The set $\pi(\bb)$ does not contain opposite points.
    \end{itemize}
\end{restatable}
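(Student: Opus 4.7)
The plan is to proceed in two stages. \emph{Stage~1} achieves (i) and (ii). The key observation is that for any $a^* \in \aaa$, if one replaces $\aaa$ by $\aaa - a^*$ and flips the sign of every $b \in \bb$ with $\la a^*, b \ra = 1$, then the $\{0,1\}$ scalar-product property is preserved: $\la a - a^*, b \ra = \la a, b \ra \in \{0,1\}$ for unflipped $b$ and $\la a - a^*, -b \ra = 1 - \la a, b \ra \in \{0,1\}$ for flipped $b$. If one moreover takes $a^* \in \aaa_1$, then $b_d$ itself is flipped to $-b_d$ and the new ``level-$0$'' set becomes $\aaa_1 - a^*$; that is, the roles of $\aaa_0$ and $\aaa_1$ are interchanged. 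Hence, if $|\aaa_0| < |\aaa_1|$ originally, one such operation reverses the inequality; otherwise I do nothing. After Stage~1, (i) holds and (ii) holds in the stronger form that $\la a, b \ra \in \{0,1\}$ for every $a \in \aaa$ and $b \in \bb$.

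\emph{Stage~2} achieves (iii) via further sign-flips. At its core is the following structural lemma: if $b, b' \in \bb$ satisfy $\pi(b) = -\pi(b') \ne 0$, then $b + b' = b_d$ and $\la a, b \ra = \la a, b' \ra = 0$ for every $a \in \aaa_0$. To prove it, write $b + b' = \lambda b_d$ for some $\lambda$; pairing with $a \in \aaa_1$ and using $\la a, b \ra, \la a, b' \ra \in \{0,1\}$ forces $\lambda \in \{0, 1, 2\}$. The cases $\lambda \in \{0, 2\}$ both place $b$ into $\mathrm{span}(b_d)$ via the spanning of $\aaa$ (for $\lambda = 0$, $-b \in \bb$ gives $\la a, b \ra = 0$ throughout $\aaa$; for $\lambda = 2$ one analogously gets $\la a, b - b_d \ra = 0$ throughout $\aaa$), contradicting $\pi(b) \ne 0$. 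Hence $\lambda = 1$, and then pairing $b + b' = b_d$ against $a \in \aaa_0$ forces both $\la a, b \ra$ and $\la a, b' \ra$ to vanish.

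The lemma implies that the elements of $\bb$ appearing in some opposite pair partition into disjoint two-element sets $\{b, b_d - b\}$, and each such $b$ satisfies $\la a, b \ra = 0$ on $\aaa_0$. From each pair I choose one representative and replace it by its negative; this preserves (ii) because $\la a, -b \ra = 0 \in \{0,1\}$ on $\aaa_0$. The main remaining obligation---and the main technical obstacle---is to verify that no new opposite pair has been introduced. I would handle this by a case analysis on how many of the two offending points were flipped in Stage~2: zero flips contradicts that every original opposite pair was broken; two flips, upon rerunning the structural lemma on the unflipped preimages, forces both points to lie in the same original pair $\{b, b_d - b\}$, violating the one-flip-per-pair rule; one flip, by the same lemma applied to the mixed configuration, forces one of the points to equal $b_d$, contradicting $\pi(b) \ne 0$.
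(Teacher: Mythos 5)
Your proof is correct, but it reaches (ii) and (iii) by a genuinely different mechanism than the paper. For (i)--(ii) the paper translates $\aaa$ by some $a_* \in \aaa_1$ but negates \emph{only} $b_d$, which leaves each $b$ with values in $\{0,\eps_b\}$ for some sign $\eps_b$; it then normalizes each $b$ individually (negating it when its sign pattern on $\aaa_0$, or failing that on a translate $\aaa_1'$ of $\aaa_1$, equals $\{0,-1\}$), after which (iii) follows from a short direct computation showing that proportional nonzero projections force ratio $1$. You instead flip \emph{every} $b$ with $\la a^*,b\ra = 1$, which buys you the uniform property $\la a,b\ra \in \{0,1\}$ on all of $\aaa$; this powers your structural lemma that opposite projected pairs are exactly the disjoint sets $\{b,\,b_d-b\}$ with both members vanishing on $\aaa_0$, and you then break these pairs one at a time. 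The price is the extra ``no new opposite pairs'' verification, which the paper's canonical per-element normalization avoids; your case analysis there does close, with two small points worth making explicit. First, in the one-flip case the unflipped preimages satisfy $\pi(b)=\pi(b')$ rather than $\pi(b)=-\pi(b')$, so it is not literally ``the same lemma'' but the analogous computation with $b-b'=\mu b_d$, $\mu\in\{\pm 1\}$, combined with the vanishing of the flipped point (hence of both points) on $\aaa_0$; this forces $\{b,b'\}=\{\zero,b_d\}$ and contradicts $\pi(b)\ne\zero$. Second, your $\lambda=0$ case also guarantees that negating a pair member never collides with an element already in $\bb$, so the flips preserve $|\bb|$ and the preimage bookkeeping in the final case analysis is well defined. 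Both routes are sound; the paper's is shorter because the sign normalization is chosen so that (iii) needs no post hoc repair, while yours yields a cleaner intermediate picture (all products in $\{0,1\}$ and an explicit description of the offending pairs) at the cost of that repair step.
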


\noindent
Note that, after this transformation we may (still) assume that $\aaa$ contains $\zero$, and thus the transformation does not affect the property that $\aaa$ and $\bb$ both span $\R^d$. Also, this transformation does neither affect the choice of $b_d$, nor the cardinalities of $\aaa,\bb$.

\begin{restatable}{claim}{claimpreimagespi}
    \label{clapreimagespi}
    Every point in $\pi(\bb)$ has at most two preimages in $\bb$.
\end{restatable}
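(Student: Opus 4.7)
My plan is to argue by contradiction: suppose $p \in \pi(\bb)$ admits three distinct preimages in $\bb$. Since $\ker \pi = \mathrm{span}(b_d)$, these preimages have the form $p + c_i b_d$ with three distinct scalars $c_i \in \R$.

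For the case $p = 0$, the preimages lie on $\mathrm{span}(b_d)$. Before the modification of Claim~\ref{firstClaim} the $\{0,1\}$-condition held for every pair, and picking any $a \in \aaa_1$ (nonempty because $\aaa$ spans $\R^d$ and $b_d \ne 0$) gives $\langle a, c b_d \rangle = c \in \{0,1\}$, so the line $\mathrm{span}(b_d)$ meets the original $\bb$ in at most two points; the sign-flip modification preserves this count.

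For $p \ne 0$, my plan is to pass back to the original $\aaa,\bb$. Property (iii) gives $-p \notin \pi(\bb)$, and unraveling the sign-flips shows that every original preimage of $p$ was kept unflipped while every original preimage of $-p$ was negated; thus the preimages of $p$ in the post-modification $\bb$ decompose as $P_p \sqcup (-P_{-p})$, where $P_q$ is the set of original preimages of $q$. For any $a$ in the original $\aaa_1$, the $\{0,1\}$-condition restricts the $b_d$-coefficient of an original preimage of $\pm p$ to a two-element set determined by $\langle a, p \rangle$, so $|P_p|,|P_{-p}| \le 2$, and the three post-modification preimages must split as $(2,1)$ or $(1,2)$.

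A two-preimage side forces $\alpha := \langle a, p \rangle$ to be a common value for every $a$ in the original $\aaa_1$. Combining this with the $\{0,1\}$-condition for $a \in \aaa_0$ applied to preimages on both sides intersects $\{0,1\}$ with $\{-1,0\}$, forcing $\langle a, p \rangle = 0$ on the original $\aaa_0$. The linear functional $L(x) := \alpha \langle x, b_d \rangle - \langle x, p \rangle$ then vanishes on every $a \in \aaa = \aaa_0 \cup \aaa_1$; since $\aaa$ spans $\R^d$ we get $L \equiv 0$, i.e.\ $p = \alpha b_d$, and $p \in U = b_d^\perp$ forces $\alpha = 0$ and $p = 0$, contradicting our assumption. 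The main obstacle I anticipate is the careful bookkeeping between original and post-modification sets, particularly justifying the decomposition $P_p \sqcup (-P_{-p})$ from property (iii).
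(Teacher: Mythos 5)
Your proof is correct, but it takes a genuinely different route from the paper's. The paper simply notes that the fiber $\pi^{-1}(y)$ is a one\nobreakdash-dimensional affine subspace and invokes Lemma~\ref{lemsliceb} (whose content, via Lemma~\ref{lemslice}, is that a set of affine dimension $k$ satisfying the $\{0,\eps_b\}$\nobreakdash-condition~\eqref{eqscaleps} and containing no opposite points has at most $2^k$ elements); since that lemma is needed anyway for the later claims, this makes the proof a one-liner. You instead unwind the transformation of Claim~\ref{firstClaim}: property~(iii) forces every original preimage of $p$ to be kept and every original preimage of $-p$ to be flipped, giving the decomposition $P_p \sqcup (-P_{-p})$; each original fiber has at most two points because evaluating against some $a$ with $\la a,b_d\ra = 1$ pins the $b_d$\nobreakdash-coefficient to a two-element set; and three or more preimages force both $P_p$ and $P_{-p}$ to be nonempty with one of size two, whence $\la a,p\ra$ is a constant $\alpha$ on $\aaa_1$ and (by intersecting $\{0,1\}$ with $\{0,-1\}$, legitimate precisely because both sides are nonempty) zero on $\aaa_0$, so $p = \alpha b_d \in b_d^\perp$ gives $p = \zero$. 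All the bookkeeping you worried about does go through, including the separate treatment of $p = \zero$ and the fact that sign flips preserve the size of the fiber over $\zero$. The trade-off: the paper's argument buys brevity by reusing its central lemma, while yours is elementary and self-contained, never touching Lemmas~\ref{lemslice} and~\ref{lemsliceb}, at the price of explicitly tracking the pre- and post-modification sets.
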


\noindent
Let $\bb_* \subseteq \bb$ denote the set of $b \in \bb$ for which $\pi(b)$ has a unique preimage. Claim~\ref{clapreimagespi} yields
$ |\bb \setminus \bb_*| = 2 |\pi(\bb \setminus \bb_*)| $.
We obtain
\begin{align}
    \nonumber
    |\aaa| |\bb| & = |\aaa_0||\bb \setminus \bb_*|  + |\aaa_0||\bb_*| + |\aaa_1||\bb \setminus \bb_*| + |\aaa_1||\bb_*| \\
    \nonumber
        & \le |\aaa_0||\bb \setminus \bb_*| + 2|\aaa_0||\bb_*| + |\aaa_1||\bb \setminus \bb_*| \\
    \nonumber
        & = 2 |\aaa_0|(|\pi(\bb \setminus \bb_*)| + |\bb_*|) + |\aaa_1||\bb \setminus \bb_*| \\
    \label{eq9930a}
        & = 2 |\aaa_0||\pi(\bb)| + |\aaa_1||\bb \setminus \bb_*|.
\end{align}
where the first inequality follows from~\eqref{eqa0gea1} and the last one from the definition of $\bb_*$.
We will bound the latter two terms separately.

Let us first provide a bound on the term $|\aaa_0||\pi(\bb)|$.
To this end, let $U_0 \subseteq U$ denote the subspace spanned by $\aaa_0$, and let $\tau : U \to U_0$ be the orthogonal projection onto $U_0$.
Note that $\tau(\pi(\bb))$ spans $U_0$ as $\bb$ spans $\R^d$.
Moreover, for each $a \in \aaa_0$ and each $b \in \bb$ we have
\[
    \la a, \tau(\pi(b))\ra = \la a, \pi(b)\ra = \la a, b\ra \in \{0,1\},
\]
where the last equality is due to~\eqref{eqscala0}.
Thus, we obtain $|\aaa_0||\tau(\pi(\bb))| \le f(\dim U_0)$ and hence the induction hypothesis yields
\begin{equation}
    \label{eq7731a}
    |\aaa_0||\tau(\pi(\bb))| \le (\dim U_0 + 1)2^{\dim U_0}.
\end{equation}
Moreover, we have the following relation between the sizes of $\tau(\pi(\bb))$ and $\pi(\bb)$:

\begin{restatable}{claim}{claimpreimagestau}
    We have $|\pi(\bb)| \le 2^{d - 1 - \dim U_0} |\tau(\pi(\bb))|$.
\end{restatable}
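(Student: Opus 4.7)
The plan is to bound each fiber $F_y := \{p \in \pi(\bb) : \tau(p) = y\}$ of $\tau$ restricted to $\pi(\bb)$, for $y \in \tau(\pi(\bb))$, by roughly $2^k$ where $k := d - 1 - \dim U_0 = \dim W$ and $W := U \cap U_0^\perp$. Summed over $y$, this yields the claim.

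First I would extract an affine basis of $W$ from $\aaa_1$. Since $\aaa$ spans $\R^d$ while $\aaa_0 \subseteq U_0$ is orthogonal to both $W$ and $b_d$, and every $a \in \aaa_1$ shares the common component $\tfrac{1}{\|b_d\|^2} b_d$ along $b_d$, the projection of $\aaa_1$ onto $W \oplus \R b_d$ spans this $(k+1)$-dimensional subspace. Hence the images $w_a := \pi(a) - \tau(\pi(a))$ of $\aaa_1$ in $W$ affinely span $W$, and we may select $a_0, a_1, \dots, a_k \in \aaa_1$ for which $w_{a_1} - w_{a_0}, \dots, w_{a_k} - w_{a_0}$ form a basis of $W$.

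Next, I would construct an injection from $F_y$ into a small subset of $\{-1,0,1\}^k$. For $p \in F_y$ and any $b \in \bb$ with $\pi(b) = p$, set $\phi(p) := (\la a_i - a_0, b \ra)_{i=1}^k$. Since $a_i - a_0 \in U$ is orthogonal to $b_d$, the inner products depend only on $p$, and the basis property of $\{w_{a_i} - w_{a_0}\}$ makes $\phi$ injective on $F_y$. A case analysis on $\la a_0, b \ra \in \{0, 1\}$ puts $\phi(F_y) \subseteq \{0,1\}^k \cup \{-1,0\}^k$, giving the coarse fiber bound $|F_y| \le 2^{k+1} - 1$.

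Finally, I would use condition (iii) of Claim~\ref{firstClaim} to save the missing factor of $2$. Observing that $\phi$ is odd on $U$, namely $\phi(-p) = -\phi(p)$, the hypothetical existence of $p \in F_y$ with $-p \in F_{-y}$ would produce colliding $\phi$-values under the involution swapping $\{0,1\}^k$ with $\{-1,0\}^k$; since (iii) excludes opposite pairs in $\pi(\bb)$, the images $\phi(F_y)$ and $-\phi(F_{-y})$ are essentially disjoint inside $\{0,1\}^k \cup \{-1,0\}^k$. This yields $|F_y| + |F_{-y}| \le 2^{k+1}$ for paired fibers and $|F_0| \le 2^k$ for the self-paired origin fiber, summing to $|\pi(\bb)| \le 2^k \cdot |\tau(\pi(\bb))|$.

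The main obstacle is the last step for isolated fibers: typically $-y \notin \tau(\pi(\bb))$ for $y \ne 0$ (any $a \in \aaa_0$ with $\la a, y \ra = 1$ forces $\la a, -y \ra = -1 \notin \{0,1\}$), so the naive pairing argument must be replaced. The way out should be the inclusion-wise maximality of $\bb \setminus \{\zero\}$ combined with (iii): for such an isolated $y$, maximality ought to force every $p \in F_y$ into a single one of the two cases $\{0,1\}^k$ or $\{-1,0\}^k$, giving $|F_y| \le 2^k$ directly. Arranging the cases carefully so that this works for every $y \in \tau(\pi(\bb))$ is the heart of the argument.
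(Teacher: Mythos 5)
Your setup is sound and matches the paper's in outline: both arguments reduce the claim to a per-fiber bound $|F_y|=|(y+W)\cap\pi(\bb)|\le 2^{k}$ with $k=\dim W=d-1-\dim U_0$, and your coordinatization of the fibers via $k+1$ points of $\aaa_1$ is correct (the $w_a$ do affinely span $W$, $\phi$ is well defined and injective on each fiber, and $\phi(F_y)\subseteq\{0,1\}^k\cup\{0,-1\}^k$). But the step from the coarse bound $2^{k+1}-1$ down to $2^k$ --- which is the entire content of the claim --- is missing, and you say so yourself. The obstruction you identify is real: for $y\ne 0$ the relation $\phi(p)=-\phi(p')$ with $p,p'\in F_y$ does \emph{not} force $p=-p'$ (the map $\phi$ restricted to the affine fiber $y+W$ is affine, not linear through the origin), so Claim~\ref{firstClaim}~(iii) cannot be used to rule out opposite points in $\phi(F_y)$ and Lemma~\ref{lemslice} does not apply. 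Your proposed repair --- that maximality of $\bb\setminus\{\zero\}$ should force all of $F_y$ into a single sign class $\{0,1\}^k$ or $\{0,-1\}^k$ --- is not substantiated and is not how the paper closes the gap.

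The ingredient you are missing is the extremal choice of $b_d$, which your proposal never invokes. The paper splits on whether $\sigma(\aaa_1')$ spans $\Pi=\mathrm{span}(v,W)$. When it does, one can coordinatize the fiber by a basis of $\Pi$ taken from $\sigma(\aaa_1')$ itself; this map is a linear isomorphism on $\Pi$, so opposite points in the image \emph{do} correspond to opposite points of $\pi(\bb)$, (iii) applies, and Lemma~\ref{lemslice} yields $2^{\dim F_y}\le 2^{k}$ (this is essentially your easy case, done so that the no-opposite-points hypothesis is actually usable). When $\sigma(\aaa_1')$ does not span $\Pi$, no counting argument of the above type suffices; instead the paper shows this configuration forces $|F_y|=1$, by constructing two explicit vectors $b_1,b_2$ that must belong to $\bb$ by inclusion-wise maximality and satisfy $\max\{\varphi(b_1),\varphi(b_2)\}>\varphi(b_d)$, contradicting the choice of $b_d$ as a maximizer of $\varphi$. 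Without some argument of this kind your hard case remains open, so the proposal as it stands does not prove the claim.
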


\noindent
Combining~\eqref{eq9930a} and~\eqref{eq7731a} with the above claim we thus obtain
\begin{equation}
    \label{eq1346a}
    |\aaa| |\bb| \le 2^{d - \dim U_0} |\aaa_0||\tau(\pi(\bb))| + |\aaa_1||\bb \setminus \bb_*|
        \le (\dim U_0 + 1)2^d + |\aaa_1||\bb \setminus \bb_*|.
\end{equation}
In order to bound the second term $|\aaa_1||\bb \setminus \bb_*|$, the following observation is useful.

\begin{restatable}{claim}{claimrestbbconstant}
    For each $b \in \bb \setminus \bb_*$ we have $|\{ \la a, b\ra : a \in \aaa_0 \}| = 1$ or $|\{ \la a, b\ra : a \in \aaa_1 \}| = 1$.
\end{restatable}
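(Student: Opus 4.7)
The plan is to exploit the fact that, by Claim~\ref{clapreimagespi}, every $b \in \bb \setminus \bb_*$ has exactly one partner $b' \in \bb$ with $\pi(b') = \pi(b)$ and $b' \neq b$. Since $\pi$ is orthogonal projection onto $U = b_d^\perp$, the equality $\pi(b) = \pi(b')$ forces $b - b' = \lambda b_d$ for some scalar $\lambda$, and $\lambda \neq 0$ since $b \neq b'$.

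Next I would take an arbitrary $a \in \aaa$ and evaluate $\la a, b\ra - \la a, b'\ra = \lambda \la a, b_d\ra$. The key case is $a \in \aaa_1$: here $\la a, b_d\ra = 1$, so $\la a, b\ra - \la a, b'\ra = \lambda$. But both $\la a, b\ra$ and $\la a, b'\ra$ lie in $\{0,1\}$, so their difference is in $\{-1,0,1\}$. Combined with $\lambda \neq 0$, this pins down $\lambda \in \{-1,+1\}$ and, crucially, forces $\la a, b\ra$ to take the same value for every $a \in \aaa_1$ (namely $1$ if $\lambda = 1$, and $0$ if $\lambda = -1$).

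To make sure this actually proves the claim (rather than producing an empty alternative), I need $\aaa_1 \neq \emptyset$. This is immediate: if $\aaa_1$ were empty, every $a \in \aaa$ would satisfy $\la a, b_d\ra = 0$, contradicting the fact that $\aaa$ spans $\R^d$ and $b_d \neq \zero$. Hence $|\{\la a, b\ra : a \in \aaa_1\}| = 1$, which certainly implies the stated disjunction.

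I do not expect a serious obstacle here; the substance of the argument is just that collapsing a fibre of $\pi$ turns $\la \cdot, b_d\ra$ into a $\{0,1\}$-valued constraint on $\la a, b\ra - \la a, b'\ra$. The only thing to watch is that the transformations in Claim~\ref{firstClaim} do not break the membership of $b_d$ in $\bb$ or the $\{0,1\}$-scalar-product property, both of which are guaranteed; and that $b$ being paired with a distinct $b'$ (as opposed to having a trivial fibre) is exactly the content of $b \in \bb \setminus \bb_*$.
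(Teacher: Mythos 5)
There is a genuine gap, and it is exactly at the step you flagged as unproblematic: you assert that $\la a,b\ra$ and $\la a,b'\ra$ lie in $\{0,1\}$ for $a \in \aaa_1$, but after the transformation of Claim~\ref{firstClaim} this is only guaranteed for $a \in \aaa_0$ (property (ii), i.e.\ \eqref{eqscala0}). The transformation translates $\aaa$ by some $a_*$ (which shifts each product $\la a,b\ra$ by $\la a_*,b\ra$, possibly producing the value $-1$) and negates only those $b$ whose products with $\aaa_0$, respectively with the translate $\aaa_1'$, land in $\{0,-1\}$; what survives for general $a \in \aaa$ is only \eqref{eqscaleps}, namely $\la a,b\ra \in \{0,\eps_b\}$ with $\eps_b \in \{\pm 1\}$, and the signs $\eps_b,\eps_{b'}$ of the two points in a fibre need not agree. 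Your difference argument then fails to pin anything down: if, say, $\eps_b = 1$, $\eps_{b'} = -1$ and $\lambda = 1$, then both $(\la a,b\ra,\la a,b'\ra) = (1,0)$ and $(\la a,b\ra,\la a,b'\ra) = (0,-1)$ are admissible and give the same difference $\lambda$, so $\la a,b\ra$ need not be constant on $\aaa_1$. Note also that you are proving something strictly stronger than the statement (that the $\aaa_1$-disjunct always holds); the paper only claims the disjunction, and in the sequel it explicitly allows the set $\bb_0$ of points whose products are constant on $\aaa_0$ but not on $\aaa_1$ to be nonempty --- a hint that the stronger version should not be expected.

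The paper's proof performs the same fibre computation but by contradiction, and the extra hypothesis is what rescues the sign issue: assuming both restrictions are non-constant, \eqref{eqscala0} gives $\{\la a,b\ra : a \in \aaa_0\} = \{\la a,b'\ra : a \in \aaa_0\} = \{0,1\}$ (the two sets coincide because $\aaa_0 \perp b_d$), whence $\eps_b = \eps_{b'} = 1$ by \eqref{eqscaleps}. Only at that point are all four products on $\aaa_1$ known to lie in $\{0,1\}$, and the shift identity $\{\la a,b'\ra : a \in \aaa_1\} = \{\la a,b\ra : a \in \aaa_1\} + \gamma$ forces $\gamma = 0$, a contradiction. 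So the missing ingredient in your write-up is the use of the non-constancy on $\aaa_0$ to normalize $\eps_b$ and $\eps_{b'}$ before examining $\aaa_1$.
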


\noindent
The above claim implies that we can partition $\bb \setminus \bb_*$ into two sets $\bb_0,\bb_1$ where
\begin{align*}
    |\{ \la a, b\ra : a \in \aaa_0 \}| & = 1 \text{ for all } b \in \bb_0, \\
    |\{ \la a, b\ra : a \in \aaa_1 \}| & = 1 \text{ for all } b \in \bb_1.
\end{align*}
Note that we may choose this partition such that $\zero, b_d \notin \bb_0$.
By~\eqref{eq1346a} and~\eqref{eqa0gea1} we have
\begin{align}
    \nonumber
    |\aaa| |\bb| & \le (\dim U_0 + 1)2^d + |\aaa_1||\bb_0| + |\aaa_1||\bb_1| \\
    \label{eq8226}
        & \le (\dim U_0 + 1)2^d + |\aaa_0||\bb_0| + |\aaa_1||\bb_1|.
\end{align}
Finally, we make use of the following observation.

\begin{restatable}{claim}{claimaaaibbi}\label{claim5}
    For $i=0,1$ we have $|\aaa_i||\bb_i| \le 2^d$.
\end{restatable}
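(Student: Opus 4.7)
The plan is to prove each of the two inequalities by factorizing $2^d$ as $2^{k_i}\cdot 2^{d-k_i}$, where $k_i$ is the dimension of a subspace naturally associated to $\aaa_i$. The basic idea is to locate a subspace $U_i\subseteq\R^d$ of dimension $k_i$ such that (after a suitable translation) $\aaa_i$ lies inside $U_i$, while $\bb_i$ lies inside the orthogonal complement $U_i^\perp$. Then $|\aaa_i|\le 2^{k_i}$ and $|\bb_i|\le 2^{d-k_i}$ both follow from the elementary fact that at most $2^k$ points of a $k$-dimensional subspace can have $\{0,1\}$-valued scalar products with a linearly spanning set of that subspace.

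For $i=0$, since $\zero\in\aaa_0$ I would take $U_0:=\mathrm{span}(\aaa_0)$; note $U_0\subseteq U$. For each $b\in\bb_0$ the scalar product $\la a,b\ra$ is constant over $a\in\aaa_0$, and since $\zero\in\aaa_0$ this constant must be $0$. Hence $\bb_0\subseteq U_0^\perp$. The bound $|\aaa_0|\le 2^{\dim U_0}$ then follows by testing $\aaa_0$ against $\tau(\bb)$, which already spans $U_0$ (as $\bb$ spans $\R^d$) and satisfies $\la a,\tau(b)\ra=\la a,b\ra\in\{0,1\}$ for $a\in\aaa_0$. Symmetrically, the orthogonal projection $\sigma\colon\R^d\to U_0^\perp$ sends $\aaa$ to a set spanning $U_0^\perp$, and for $b\in\bb_0\subseteq U_0^\perp$ we have $\la\sigma(a),b\ra=\la a,b\ra\in\{0,1\}$; hence $|\bb_0|\le 2^{d-\dim U_0}$.

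The $i=1$ case is analogous after an affine shift. Assuming $\aaa_1\neq\emptyset$, fix $a_*\in\aaa_1$ and set $U_1:=\mathrm{span}(\aaa_1-a_*)\subseteq U$. Constancy of $\la\cdot,b\ra$ on $\aaa_1$ for $b\in\bb_1$ again yields $\bb_1\subseteq U_1^\perp$, and the argument for $|\bb_1|\le 2^{d-\dim U_1}$ carries over verbatim. To bound $|\aaa_1|$ I need a set inside $U_1$ that spans $U_1$ and has $\{0,1\}$-valued scalar products with $\aaa_1-a_*$; I would obtain such a set by replacing each $b\in\bb$ with $\la a_*,b\ra=1$ by $-b$ (so that $\la a-a_*,b_{\mathrm{new}}\ra\in\{0,1\}$ for every $a\in\aaa_1$), and then projecting the resulting set orthogonally onto $U_1$.

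Multiplying the two bounds gives $|\aaa_i||\bb_i|\le 2^{k_i}\cdot 2^{d-k_i}=2^d$ in each case. The only delicate point I anticipate is the sign-flipping maneuver used in the $i=1$ case, needed to restore $\{0,1\}$ scalar products after translating $\aaa_1$ away from the origin; everything else reduces to the basic observation that a linearly spanning set of a $k$-dimensional subspace forces at most $2^k$ distinct $\{0,1\}$-evaluations.
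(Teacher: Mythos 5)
Your decomposition is the mirror image of the paper's: the paper takes $k=\dim \mathrm{span}(\bb_i)$ and proves $|\bb_i|\le 2^k$ and $|\aaa_i|\le 2^{d-k}$ (the latter because $\aaa_i$ lies in the common level set $\{x:\la x,b\ra=\xi_b\ \forall b\in\bb_i\}$), whereas you take $k_i$ to be the dimension of the span of the translated $\aaa_i$ and prove $|\aaa_i|\le 2^{k_i}$ and $|\bb_i|\le 2^{d-k_i}$. Since $\mathrm{span}(\bb_i)\subseteq U_i^{\perp}$ and the common level set contains the affine hull of $\aaa_i$, the two factorizations are interchangeable and both multiply out to $2^d$; the overall structure of your argument is sound and rests on the same orthogonality/constancy relation between $\aaa_i$ and $\bb_i$.

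The one step where your cited ``elementary fact'' does not suffice as stated is the bound $|\bb_i|\le 2^{d-k_i}$. For the $|\aaa_i|$ bounds you can indeed normalize all scalar products into $\{0,1\}$ (your flip rule should be ``negate $b$ whenever $\{\la a-a_*,b\ra:a\in\aaa_1\}=\{0,-1\}$'' rather than ``whenever $\la a_*,b\ra=1$'', since after Claim~\ref{firstClaim} one only has $\la a,b\ra\in\{0,\eps_b\}$ with a $b$-dependent sign; but this is cosmetic), and then mapping $a$ to its coordinates against a basis chosen from the test set injects the translated $\aaa_i$ into $\{0,1\}^{k_i}$. For $\bb_i$, however, the sign $\eps_b$ depends on the element $b$ being counted, not on the spanning vectors $\sigma(a)$, so the assertion $\la\sigma(a),b\ra\in\{0,1\}$ is not justified: the coordinate map sends $\bb_i$ into $\{0,1\}^{m}\cup\{0,-1\}^{m}$ with $m=d-k_i$, a set of $2^{m+1}-1$ points, so the naive count loses a factor of $2$. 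Recovering the bound $2^m$ is exactly the content of the paper's Lemma~\ref{lemslice} (via Lemma~\ref{lemsliceb}), and it additionally requires that $\bb_i$ contain no opposite points. That hypothesis does hold here --- the original $\bb$ has no opposite points because $b,-b\in\bb$ would force $b$ to be orthogonal to the spanning set $\aaa$, and the transformation of Claim~\ref{firstClaim} only multiplies elements of $\bb$ by $\pm1$ bijectively --- but it must be invoked. With Lemma~\ref{lemsliceb} and this observation supplied, your proof closes; without them, the $|\bb_i|$ half is incomplete.
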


\noindent
If $\dim U_0 \le d-2$, then~\eqref{eq8226} together with the above claim yields
\[
    |\aaa| |\bb| \le (d - 1)2^d + 2 \cdot 2^d = (d+1) 2^d,
\]
as required.
It remains to consider the case $\dim U_0 = d - 1$.
In this case, the only nonzero  point in $\bb$ that has constant scalar product with all points in $\aaa_0$ is $b_d$.
Since $\zero,b_d \notin \bb_0$, we have $\bb_0 = \emptyset$.
Again by~\eqref{eq8226} and Claim~\ref{claim5} we conclude
\[
    \pushQED{\qed} 
    |\aaa| |\bb| \le d 2^d + |\aaa_1||\bb_1| \le d 2^d + 2^d = (d+1) 2^d.
    \qedhere \popQED
\]

\section{Proofs of claims}
\label{secClaims}

In this section, we provide the proofs of all previous claims, whose statements we repeat here.
We begin by explaining how the initial transformation in the proof of Theorem~\ref{thmbinary} can be performed.

\claimassumptions*
\begin{proof}
    If $|\{ a \in \aaa : \la a, b_d\ra = 0\}| \le |\{ a \in \aaa : \la a, b_d\ra = 1\}|$, then we can choose any $a_* \in \aaa$ with $\la a_*, b_d\ra = 1$ (which exists since $\aaa$ spans $\R^d$) and replace $\aaa$ by $\aaa - a_*$, $\bb$ by $(\bb \setminus \{b_d\}) \cup \{-b_d\}$, and $b_d$ by $-b_d$.
    This yields~(i).

    After this replacement, for each $b \in \bb$ there is some $\eps_b \in \{\pm 1\}$ such that $\la a, b\ra \in \{0,\eps_b\}$ holds for all $a \in \aaa$.
    Each $b$ with $\{\la a,b\ra : a \in \aaa_0\} = \{0,-1\}$ is replaced by $-b$, which yields~(ii).

    Let $\aaa_1'$ be a translate of $\aaa_1$ such that $\zero \in \aaa_1'$.
    Note that, for each $b \in \bb$ we now have $\{\la a,b\ra : a \in \aaa_0\} = \{0,1\}$ or $\{\la a,b\ra : a \in \aaa_0\} = \{0\}$.
    In the second case, we replace $b$ by $-b$ if $\{\la a,b\ra : a \in \aaa_1'\} = \{0,-1\}$, otherwise we leave it as it is.

    It remains to show that $\pi(\bb)$ does not contain opposite points after this transformation.
    To this end, let $b,b' \in \bb$ such that $\pi(b) = \beta \pi(b')$ for some $\beta \ne 0$, where $\pi(b),\pi(b') \ne \zero$.
    We have to show that $\beta = 1$.
    Note that for every $a \in \aaa_0 \cup \aaa_1' \subseteq U$ we have
    \[
        \la a,b\ra = \la a, \pi(b)\ra = \beta \la a, \pi(b')\ra = \beta \la a,b'\ra.
    \]
    Suppose first that $\{\la a,b\ra : a \in \aaa_0\} \ne \{0\}$.
    By~\eqref{eqscala0} there exists some $a \in \aaa_0$ with $1 = \la a,b\ra = \beta \la a,b'\ra$.
    Thus, we have $\la a,b'\ra \ne 0$ and hence $\la a,b'\ra = 1$, again by~\eqref{eqscala0}.
    This yields $\beta = 1$.

    Suppose now that $\{\la a,b\ra : a \in \aaa_0\} = \{0\}$.
    Note that this implies $\{\la a,b'\ra : a \in \aaa_0\} = \{0\}$.
    As $\aaa_0 \cup \aaa_1'$ spans $U$, we must have $ \{\la a,b\ra : a \in \aaa_1'\} \ne \{0\}$ and hence there is some $a \in \aaa_1'$ with $\la a,b\ra = 1$.
    Moreover, we have $\beta \la a,b'\ra = 1$, and in particular $\la a,b'\ra \ne 0$.
    This implies $\la a,b'\ra = 1$ and hence $\beta = 1$.
\end{proof}

As in the previous proof, let $\aaa_1'$ be a translate of $\aaa_1$ such that $\zero \in \aaa_1'$.
Note that for each $b \in \bb$ there are $\eps_b,\gamma_b \in \{\pm 1\}$ such that
\begin{align}
    \label{eqscaleps}
    & \la a, b\ra \in \{0,\eps_b\} \text{ for each } a \in \aaa \text{ and} \\
    \label{eqscala1pgamma}
    & \la a, b\ra \in \{0,\gamma_b\} \text{ for each } a \in \aaa_1'.
\end{align}

The proofs of the subsequent claims rely on the following two lemmas.

\begin{lemma}
    \label{lemslice}
    Suppose that $X \subseteq \{0,1\}^d \cup \{0,-1\}^d$ does not contain opposite points.
    Then we have $|X| \le 2^{\dim X}$.
\end{lemma}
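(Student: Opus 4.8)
The plan is to induct on the ambient dimension $d$ (the case $d=0$ being trivial), organising the inductive step around whether or not $\zero$ lies in the affine hull $\mathrm{aff}(X)$. Write $C:=\{0,1\}^d$ and $k:=\dim X$, and split $X=X^+\sqcup X^-$ with $X^+=X\cap C$ and $X^-=X\cap(\{0,-1\}^d\setminus\{\zero\})$; then the no-opposite hypothesis says precisely that $X^+$ and $-X^-$ are \emph{disjoint} subsets of $C$. Throughout I will use the classical Boolean slicing bound: any $T\subseteq C$ satisfies $|T|\le 2^{\dim T}$ (project $T$ onto a set of $\dim T$ coordinates to which $\mathrm{aff}(T)$ restricts injectively; the projection is injective on $T$ and lands in a smaller $0/1$-cube).

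If $\zero\in\mathrm{aff}(X)$, then $H:=\mathrm{aff}(X)$ is a linear subspace, so $-H=H$ and hence $-X^-\subseteq H\cap C$ as well. Thus $X^+\sqcup(-X^-)$ is a disjoint union inside the single $k$-flat $H$, and Boolean slicing gives $|X|=|X^+|+|X^-|\le|H\cap C|\le 2^k$. Now suppose instead that $\zero\notin\mathrm{aff}(X)$, so $\dim\operatorname{span}(X)=k+1$. If the codimension satisfies $d-k\ge 2$, then $\operatorname{span}(X)^\perp$ is nonzero, so there is a nonzero \emph{homogeneous} relation $\la\alpha,x\ra=0$ valid on all of $X$; pick a coordinate $j$ with $\alpha_j\ne 0$ and delete it via the projection $\pi$. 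On $\mathrm{aff}(X)$ the coordinate $x_j$ is a linear function of the others, so $\pi$ is injective on $X$ and preserves both $|X|$ and $k=\dim X$. The key point is that $\pi$ also preserves the no-opposite property: if $\pi(x)=-\pi(x')$ then $x_i=-x_i'$ for all $i\ne j$, and homogeneity forces $x_j=-x_j'$ too, so $x=-x'$, which is impossible in $X$ for $x\ne x'$. (This is exactly where homogeneity is essential: an inhomogeneous relation would let $\pi$ create opposite points.) Applying the induction hypothesis in dimension $d-1$ finishes this case.

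The remaining, and hardest, case is $\zero\notin\mathrm{aff}(X)$ with codimension $1$, so $\mathrm{aff}(X)=\{x:\la\beta,x\ra=1\}$ for some $\beta\ne\zero$ and $k=d-1$. Here the no-opposite hypothesis is automatic (the planes $\la\beta,x\ra=1$ and $\la\beta,x\ra=-1$ are disjoint), and sending $X^-$ through $x\mapsto-x$ yields
\[
|X|\le|\{z\in C:\la\beta,z\ra=1\}|+|\{z\in C:\la\beta,z\ra=-1\}|=|\{z\in C:\la\beta,z\ra\in\{1,-1\}\}|.
\]
So it suffices to show that at most half of the cube has $\la\beta,z\ra\in\{\pm1\}$. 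I expect this last count to be the main obstacle, but it succumbs to a pairing argument. If some coordinate $i$ has $\beta_i\notin\{0,\pm2\}$, pair each $z$ with the point $z'$ obtained by flipping its $i$-th coordinate; the scalar products $\la\beta,z\ra$ and $\la\beta,z'\ra$ differ by $|\beta_i|\notin\{0,2\}$, so a pair cannot have both members in the two-element set $\{1,-1\}$. Hence each of the $2^{d-1}$ pairs contains at most one admissible point, and the count is at most $2^{d-1}$. If instead every $\beta_i\in\{0,\pm2\}$, then $\la\beta,z\ra$ is an even integer and can never equal $\pm1$, so the set is empty. Either way the bound $2^{d-1}=2^k$ holds, completing the induction.

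A remark on why no shortcut seems available: the natural map $x\mapsto|x|$ embeds $X$ injectively into $C$, but it can strictly \emph{raise} the affine dimension (e.g.\ $X=\{(1,1,1),(1,0,0),(0,1,0),(0,0,-1)\}$ has $\dim X=2$ while its image spans $\R^3$), so combining it with Boolean slicing only gives the weaker bound $2^{\dim X+1}$. The argument above instead keeps the clean mixed-cube structure under coordinate deletions (which is why only homogeneous relations and genuinely constant/merging coordinates may be removed), and isolates the one truly extremal situation—the inhomogeneous hyperplane slice—where the cheap parity/pairing estimate recovers the missing factor of $2$.
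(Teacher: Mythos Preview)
Your proof is correct. Both arguments induct on $d$, drop a coordinate to recurse, and ultimately exploit the parity observation that if all relevant coefficients lie in $\{0,\pm 2\}$ then no $\pm 1$ scalar product can occur. The organisation, however, is genuinely different.

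The paper picks \emph{any} hyperplane $\{\la c,x\ra=\delta\}$ containing $X$ (with $\delta\in\{0,1\}$) and asks whether some coordinate projection $\pi_i$ with $c_i\neq 0$ preserves the no-opposite property; if not, it deduces $c_i=\pm 2\delta$ for every nonzero $c_i$, forcing $c=\zero$ (if $\delta=0$) or $X=\emptyset$ (if $\delta=1$). You instead split on whether $\zero\in\mathrm{aff}(X)$. In the linear case you bypass induction entirely via the reflection trick $X^-\mapsto -X^-$, which lands $X$ injectively inside a single $k$-flat of the $0/1$-cube and lets classical Boolean slicing finish; this idea does not appear in the paper. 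In the inhomogeneous codimension-$\ge 2$ case you insist on a \emph{homogeneous} relation, which makes preservation of no-opposite under projection a one-line computation rather than a case analysis. In the remaining codimension-$1$ inhomogeneous case you do a direct pairing count on the cube (flip a coordinate with $\beta_i\notin\{0,\pm2\}$), whereas the paper argues by contradiction that every projection would have to create opposite points and then derives the same $\{0,\pm 2\}$ constraint. Your route is slightly more elementary and self-contained in the first and third cases; the paper's is more uniform, treating both values of $\delta$ through the same projection mechanism.
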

\begin{proof}
    We prove the statement by induction on $d \ge 1$, and observe that it is true for $d = 1$.
    Now let $d \ge 2$.
    If $\dim X = d$, then we are also done.
    It remains to consider to case where $X$ is contained in an affine hyperplane $H \subseteq \R^d$.
    Let $c = (c_1,\ldots,c_d) \in \R^d$, $\delta \in \{0,1\}$ such that $$H = \{ x \in \R^d : \la c,x\ra = \delta \}.$$
    For each $i \in \{1,\dots,d\}$ let $\pi_i : H \to \R^{d-1}$ denote the projection that forgets the $i$-th coordinate, and let $e_i \in \R^d$ denote the $i$-th standard unit vector. Note that $\pi_{i^*}(X) \subseteq \{0,1\}^{d-1} \cup \{0,-1\}^{d-1}$.

    Suppose there is some $i^* \in \{1,\dots,d\}$ such that $\la c, e_{i^*}\ra \ne 0$ and $\pi_{i^*}(X)$ does not contain opposite points.
    By the induction hypothesis we obtain
    \[
        |X| = |\pi_{i^*}(X)| \le 2^{\dim \pi_{i^*}(X)} = 2^{\dim X},
    \]
    where the first equality and the last inequality hold since $\pi_{i^*}$ is injective (due to $\la c, e_{i^*}\ra \ne 0$).

    It remains to consider the case in which there is no such $i^*$.
    Consider any $i \in \{1,\dots,d\}$.
    If $\la c, e_i \ra \ne 0$, then there exist $x=(x_1,\ldots,x_d),x'=(x_1',\ldots,x_d') \in X$, $x \ne x'$ such that $\pi_i(x) = -\pi_i(x')$.
    We may assume that $\pi_i(x) \in \{0,1\}^{d-1}$ and hence $\pi_i(x') \in \{0,-1\}^{d-1}$.
    As $X$ does not contain opposite points, we must have $x_i = 1$ and $x'_i = 0$, or $x_i = 0$ and $x'_i = -1$.
    In the first case we obtain
    \begin{align*}
        2 \delta
        = \la c,x\ra + \la c,x'\ra
        & = [\la \pi_i(c), \pi_i(x)\ra + c_ix_i] + [\la \pi_i(c), \pi_i(x')\ra + c_ix'_i] \\
        & = [\la \pi_i(c), \pi_i(x)\ra + c_i] + [\la\pi_i(c), \pi_i(x')\ra] \\
        & = c_i.
    \end{align*}
    Similarly, in the second case we obtain $2 \delta = -c_i$.

    If $\delta = 0$, this would imply that $c = \zero$, a contradiction to the fact that $H \ne \R^d$.
    Otherwise, $\delta = 1$ and hence every nonzero coordinate of $c$ is $\pm 2$.
    Thus, for every $x \in \Z^d$ we see that $\la c,x\ra$ is an even number, in particular $\la c,x\ra \ne \delta$.
    This means that $X \subseteq \Z^d \cap H = \emptyset$, and we are done.
\end{proof}

\begin{lemma}
    \label{lemsliceb}
    Let $\aaa,\bb \subseteq \R^d$ such that $\aaa$ spans $\R^d$, $\bb$ does not contain opposite points, and for every $b \in \bb$ there is some $\eps_b \in \{ \pm 1\}$ such that $\{\la a,b\ra : a \in \aaa\} \subseteq \{0,\eps_b\}$.
    Then we have $|\bb| \le 2^{\dim \bb}$.
\end{lemma}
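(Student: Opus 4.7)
The plan is to reduce Lemma~\ref{lemsliceb} to Lemma~\ref{lemslice} via an invertible linear change of coordinates. Since $\aaa$ spans $\R^d$, I would pick a basis $a_1, \ldots, a_d \in \aaa$ and define the linear map $T : \R^d \to \R^d$ by
\[
    T(x) = (\la a_1, x\ra, \ldots, \la a_d, x\ra).
\]
Because $a_1, \ldots, a_d$ are linearly independent, $T$ is a bijection of $\R^d$.

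The first step is to observe that $T(\bb) \subseteq \{0,1\}^d \cup \{0,-1\}^d$: for each $b \in \bb$, every coordinate of $T(b)$ lies in $\{0, \eps_b\}$, so $T(b) \in \{0, \eps_b\}^d$. The second step is to verify that $T(\bb)$ contains no opposite points. Suppose for contradiction that $T(b) = -T(b')$ for some $b, b' \in \bb$ with $T(b) \ne \zero$. By linearity and injectivity of $T$ we would then get $b = -b'$, and since $T(b) \ne \zero$ we have $b \ne \zero$, hence $b \ne b'$; this contradicts the assumption that $\bb$ does not contain opposite points.

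Finally, since $T$ is a linear bijection from $\R^d$ to $\R^d$, it preserves affine dimension, so $\dim T(\bb) = \dim \bb$. Applying Lemma~\ref{lemslice} to $T(\bb)$ then yields
\[
    |\bb| = |T(\bb)| \le 2^{\dim T(\bb)} = 2^{\dim \bb},
\]
as desired. I do not anticipate any significant obstacle here; the only subtle point is confirming that the linear bijection $T$ transfers the no-opposite-points property from $\bb$ to $T(\bb)$, which is handled above by the case distinction ensuring $b \ne \zero$.
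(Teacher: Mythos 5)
Your proof is correct and follows essentially the same route as the paper: the map $T$ you define is exactly the paper's linear change of coordinates $b \mapsto Mb$ with $a_i = M^\top e_i$, followed by the same application of Lemma~\ref{lemslice} to the image set. The only difference is that you spell out the (easy) transfer of the no-opposite-points property, which the paper states without detail.
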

\begin{proof}
    Let $a_1,\dots,a_d \in \aaa$ be a basis of $\R^d$, and let $M \in \R^{d \times d}$ such that $a_i = M^\top e_i$ for $i=1,\dots,d$.
    For every $b \in \bb$ we obtain
    \[
        \la e_i, Mb\ra = 
        \la a_i, M^{-1}Mb\ra = \la a_i, b\ra \in \{0,\eps_b\}
    \]
    for $i=1,\dots,d$ and hence $Mb \in \{0,\eps_b\}^d$.
    Thus, the set $X := \{ Mb : b \in \bb\}$ is contained in $\{0,1\}^d \cup \{0,-1\}^d$.
    As $\bb$ does not contain opposite points, we see that also $X$ does not contain opposite points.
    Lemma~\ref{lemslice} now implies $|\bb| = |X| \le 2^{\dim X} = 2^{\dim \bb}$.
\end{proof}

We are ready to continue with the proofs of the remaining claims.

\claimpreimagespi*
\begin{proof}
    Let $y := \pi(b)$ for some $b \in \bb$ and observe that $\pi^{-1}(y) = \{x \in \R^d : \pi(x) = \pi(y)\}$ is a one-dimensional affine subspace.
    By~\eqref{eqscaleps} and Lemma~\ref{lemsliceb} we obtain $|\bb \cap \pi^{-1}(y)| \le 2$.
\end{proof}

\claimpreimagestau*
\begin{proof}
    Fix any $b \in \bb$ and let $v := \pi(b^*)$.
    Consider the orthogonal complement $W \subseteq U$ of $U_0$ in $U$.
    As $\tau^{-1}(\tau(v)) 
    = v + W 
    $, it suffices to show that
    \[
        |(v + W) \cap \pi(\bb)| \le 2^{d - 1 - \dim U_0}
    \]
    holds.
    To this end, consider the linear subspace $\Pi \subseteq U$ spanned by $v$ and $W$ and let $\sigma : U \to \Pi$ denote the orthogonal projection on $\Pi$.
    
    First, suppose that $\sigma(\aaa_1')$ spans $\Pi$.
    For every $a \in \aaa_1' \subseteq U$ and every $b \in \bb$ with $\pi(b) \in v + W \subseteq \Pi $ we have
    \[
        \la \sigma(a), \pi(b)\ra = \la a,\pi(b)\ra = \la a,b\ra \in \{0,\gamma_b\}
    \]
    by~\eqref{eqscala1pgamma}.
    Moreover, recall that $\pi(\bb)$ does not contain opposite points by Claim~\ref{firstClaim}~(iii).
    Thus, the pair $\sigma(\aaa_1')$ and $(v + W) \cap \pi(\bb)$ satisfies the requirements of Lemma~\ref{lemsliceb} (in $U$), and hence we obtain
    \[
        |(v + W) \cap \pi(\bb)| \le 2^{\dim(v + W)} = 2^{\dim W} = 2^{\dim U - \dim U_0} = 2^{d - 1 - \dim U_0}.
    \]
    It remains to consider the case in which $\sigma(\aaa_1')$ does not span $\Pi$.
    Unless $|(v + W) \cap \pi(\bb)| = 1$, we will identify points $b_1,b_2 \in \bb$ with $\max \{ \varphi(b_1),\varphi(b_2) \} > \varphi(b_d)$, a contradiction to the choice of $b_d$.

    As $\aaa_0 \cup \aaa_1'$ spans $U$, we know that $\sigma(\aaa_0 \cup \aaa_1')$ spans $\Pi$.
    Since $\aaa_0$ is orthogonal to $W$, this means that $\sigma(\aaa_0)$ spans a line, and $\sigma(\aaa_1')$ spans a hyperplane $H$ in $\Pi$.
    Note that we have $v \notin W$ (otherwise $W = \Pi$ and so $\sigma(\aaa_1')$ spans $\Pi$).
    Thus, every nonzero point in $\sigma(\aaa_0)$ has nonzero scalar product with $v$.
    Moreover, for every $a \in \aaa_0$ with $\sigma(a) \ne \zero$ we have $\la \sigma(a), v\ra = \la a, v\ra = \la a,b\ra \in \{0,1\}$ by~\eqref{eqscala0}.
    Thus, since the nonzero vectors in $\sigma(\aaa_0)$ are collinear, we obtain
    \[
        \sigma(\aaa_0) \subseteq \{\zero, \sigma(a_0)\}
    \]
    for some $a_0 \in \aaa_0$. Since $\mathbf 0\in H,$ we have $\sigma(\aaa_0)\setminus H\subseteq \{\sigma(a_0)\}$ and further, since $\sigma(\aaa_0\cup \aaa_1')$ spans $\Pi$, we have $\sigma(\aaa_0)\setminus H= \{\sigma(a_0)\}$. 
    Let $c \in \Pi$ be a normal vector of $H$.  
    As $\sigma(a_0) \notin H$, we may scale $c$ so that $\la \sigma(a_0), c\ra = 1$.
    Let $a_* \in \aaa_1$ such that $\aaa_1' = \aaa_1 - a_*$.
    We define
    \[
        b_1 := c - \delta_1 b_d \ne \zero,
    \]
    where $\delta_1 := \la a_*, c\ra$.
    For every $a \in \aaa_0$ we have
    \[
        \la a,b_1\ra = \la a, c\ra = \la \sigma(a), c\ra \in \{\la \zero,c\ra, \la \sigma(a_0), c\ra\} = \{0,1\},
    \]
    and for every $a \in \aaa_1$ we have
    \begin{align*}
        \la a,b_1\ra
        = \la \underbrace{a - a_*}_{\in \aaa_1'}, b_1\ra + \la a_*, b_1\ra
        &= \la a - a_*, c\ra + \la a_*, b_1\ra
         = \la {\underbrace{\sigma(a - a_*)}_{\in H}}, c\ra + \la a_*, b_1\ra \\
        & = \la a_*, b_1\ra
        = \la a_*, c\ra - \delta_1 \la a_*, b_d\ra
        = \la a_*, c\ra - \delta_1
        = 0.
    \end{align*}
    Thus, by the maximality of $\bb$, (a scaling of) the vector $b_1$ is contained in $\bb$.
    Since we assumed $\zero \in \aaa_0$, we have $\varphi(b_1) \ge \dim(\aaa_1) + 1$.

    In order to construct $b_2$, let us suppose that there is another point $b' \in \bb$ with $v' := \pi(b') \ne v$ and $v' \in (v + W)$.
    If there is no such point, then the statement of the claim is true.
    Recall that $\sigma(a_0)$ is orthogonal to $W$, and let
    \[
        \xi := \la \sigma(a_0), v\ra = \la \sigma(a_0), \underbrace{v - v'}_{\in W}\ra + \la \sigma(a_0), v'\ra = \la \sigma(a_0), v'\ra.
    \]
    Choose $v'' \in \{v,v'\}$ such that $\xi c \ne v''$, and let $b'' \in \{b,b'\}$ such that $\pi(b'') = v''$.
    Define $\delta_2 := \la a_*,v'' - \xi c\ra$ and note that
    \[
        b_2 := v'' - \xi c - \delta_2 b_d
    \]
    is nonzero since $v'' - \xi c \in U \setminus \{\zero\}$.
    For every $a \in \aaa_0$ we have
    \[
        \la a,b_2\ra = \la a, \underbrace{v'' - \xi c}_{\in \Pi}\ra = \la \sigma(a), v'' - \xi c\ra,
    \]
    which is zero if $\sigma(a) = \zero$.
    Otherwise, $\sigma(a) = \sigma(a_0)$ and we obtain
    \[
        \la a,b_2\ra = \la \sigma(a_0), v''\ra - \xi \la \sigma(a_0), c\ra = \la \sigma(a_0), v''\ra - \xi = 0.
    \]
    Thus, $b_2$ is orthogonal to $\aaa_0$.
    Moreover, note that
    \[
        \la a_*, b_2\ra = \la a_*, v'' - \xi c\ra - \delta_2 \underbrace{\la a_*, b_d\ra}_{= 1} = 0.
    \]
    Thus, for every $a \in \aaa_1$ we have
    \begin{align*}
        \la a,b_2\ra
        = \la a - a_*, b_2\ra + \la a_*, b_2\ra
        = \la a - a_*, b_2\ra
        & = \la a - a_*, v''\ra - \xi \underbrace{\la a - a_*, c\ra}_{= 0} - \delta_2 \underbrace{\la a - a_*, b_d\ra}_{= 0} \\
        & = \la a - a_*, v''\ra = \la a - a_*, b''\ra \in \{0,\gamma_{b''}\}
    \end{align*}
    by~\eqref{eqscala1pgamma}.
    Thus, again by the maximality of $\bb$, (a scaling of) the vector $b_2$ is contained in $\bb$, and since $b_2$ is orthogonal to $\aaa_0$ and $a_* \in \aaa_1$, we have $\varphi(b_2) \ge \dim(\aaa_0) + 1$.
    However, by the choice of $b_d$ we must have
    \[
         \max \{ \dim(\aaa_0), \dim(\aaa_1) \} + 1 \le \max \{\varphi(b_1), \varphi(b_2)\} \le \varphi(b_d) = \max\{\dim(\aaa_0), \dim(\aaa_1)\},
    \]
    a contradiction.
\end{proof}

\claimrestbbconstant*
\begin{proof}
    Let $b \in \bb \setminus \bb_*$ and, for the sake of contradiction, suppose that $|\{ \la a,b\ra : a \in \aaa_0 \}| = |\{ \la a,b\ra : a \in \aaa_1 \}| = 2$.
    Let $b' \in \bb \setminus \{b\}$ such that $\pi(b) = \pi(b')$.
    In other words, we have $b' = b + \gamma b_d$ for some $\gamma \ne 0$.
    Then, by~\eqref{eqscala0} we have
    \[
        \{ \la a,b'\ra : a \in \aaa_0 \} = \{ \la a,b\ra : a \in \aaa_0 \} = \{0,1\}
    \]
    and hence we obtain $\eps_b = \eps_{b'} = 1$ by~\eqref{eqscaleps}.
    Again by~\eqref{eqscaleps} we see
    \[
        \{0,1\} \supseteq \{ \la a,b'\ra : a \in \aaa_1 \} = \{ \la a,b\ra : a \in \aaa_1 \} + \gamma = \{0,1\} + \gamma = \{\gamma, 1+\gamma\},
    \]
    which implies $\gamma = 0$, a contradiction.
\end{proof}

\claimaaaibbi*
\begin{proof}
    First, note that by~\eqref{eqscaleps} there is an invertible matrix $M \in \R^{d \times d}$ such that $M(\aaa) := \{ Ma : a \in \aaa\} \subseteq \{0,1\}^d$.
    Now let $i \in \{0,1\}$.
    Denote by $V$ the span of $\bb_i$ and let $k := \dim V$.
    Clearly, we have $\bb_i \subseteq \bb \cap V$ and hence by~\eqref{eqscaleps} and Lemma~\ref{lemsliceb} we obtain $|\bb_i| \le |\bb \cap V| \le 2^k$.
    Recall that for each $b \in \bb_i$ there some $\xi_b$ such that $\la a,b\ra = \xi_b$ holds for all $a \in \aaa_i$.
    Thus, $\aaa_i$ is a subset of $\aaa \cap V'$, where $V' := \{ x \in \R^d : \la x,b\ra = \xi_b \text{ for all } b \in \bb_i \}$, and hence we obtain
    \begin{align*}
        |\aaa_i| = |M(\aaa_i)| \le |M(\aaa \cap M(V'))| \le |\{0,1\}^d \cap M(V')| & \le 2^{\dim M(V')} \\
        & = 2^{\dim V'} \le 2^{d - \dim V} = 2^{d-k},
    \end{align*}
    where the third inequality follows from Lemma~\ref{lemslice}.
    We conclude that $|\aaa_i| |\bb_i| \le 2^{d-k} 2^k = 2^d$.  
\end{proof}

\section{Application to 2-level polytopes}
\label{sec2level}

In this section, we provide a proof for our main application:

\twolevelresult*

\begin{proof}
Let $P \subseteq \R^d$ be a $d$-dimensional 2-level polytope.
We may assume that $\zero$ is among the vertices of $P$.
Thus, there exists a finite set $\bb \subseteq \R^d \setminus \{\zero\}$ such that
\[
    P = \{ x \in \R^d : 0 \le \la x,b\ra \le 1 \text{ for every } b \in \bb \},
\]
where for each $b$, at least one of the equations $\la x,b\ra = 0$, $\la x,b\ra = 1$ defines a facet of $P$.
Let $\aaa$ consist of the vertices of $P$.
As $P$ is $d$-dimensional and pointed, both $\aaa$ and $\bb$ span $\R^d$ and hence $\aaa$ and $\bb$ satisfy the assumptions of Theorem~\ref{thmbinary}.
If no vector in $\bb$ defines two facets of $P$, we have
\[
    f_0(P) f_{d-1}(P) = |\aaa| |\bb| \le (d+1)2^d \le d2^{d+1}.
\]
If there exists a vector that defines two facets of $P$, then in the proof of Theorem~\ref{thmbinary} we may choose $b_d$ as this vector.
Indeed, using the notation of Section~\ref{secMainProof}, $\aaa_0$, $\aaa_1$ are then the vertex sets of the respective facets and $\varphi(b_d) = \dim \aaa_0 = \dim \aaa_1 = \dim U_0 = d-1$.
Recall the inequality~\eqref{eq8226}, which yields
\begin{align*}
    f_0(P) f_{d-1}(P) \le 2 |\aaa| |\bb| & \le (\dim U_0 + 1)2^{d+1} + 2|\aaa_0||\bb_0| + 2|\aaa_1||\bb_1| \\
    & = d2^{d+1} + 2|\aaa_0||\bb_0| + 2|\aaa_1||\bb_1|.
\end{align*}
We claim that $\bb_0 = \bb_1 = \emptyset$, in which case we are done.
To this end, suppose there is some $b \in \bb_i$.
Recall that $b$ has constant scalar product with all points in $\aaa_i$.
As $\bb_i \subseteq \bb$ and $\zero \notin \bb$, we obtain $b=b_d$.
However, using $\zero \notin \bb$ again, we have $b_d \in \bb_*$, contradicting the fact that $\bb_i \subseteq \bb \setminus \bb_*$.
\end{proof}

\bibliographystyle{abbrv}
\bibliography{references}

\end{document}